\documentclass[12pt,reqno]{article}
\usepackage{amsmath,amssymb,amsfonts,amscd,latexsym,amsthm,mathrsfs,verbatim,comment}
\usepackage[usenames]{color}
\usepackage{enumerate}
\usepackage[unicode]{hyperref}
\textheight23.5cm \textwidth16.5cm \hoffset-1.0cm \voffset-2.5cm
\newtheorem{theorem}{Theorem}[section]

\theoremstyle{remark}
\newtheorem*{remark}{\bf Remark}

\DeclareMathOperator{\asin}{asin}

\newcommand{\G}{\Gamma}

\newcommand{\z}{\zeta}

\renewcommand{\th}{\theta}
\renewcommand{\d}{{\mathrm d}}
\DeclareMathOperator{\Li}{Li}

\newcommand{\<}{\langle}
\renewcommand{\>}{\rangle}

\renewcommand{\d}{{\mathrm d}}

\newcommand{\ba}{\boldsymbol a}

\newcommand{\bc}{\boldsymbol c}
\newcommand{\fa}{\mathfrak a}
\newcommand{\fb}{\mathfrak b}
\newcommand{\fh}{\mathfrak h}

\newcommand{\fG}{\mathfrak G}
\begin{document}
\pagestyle{plain}

\title{Variations on a theme of Ap\'ery}

\author{Henri Cohen (Bordeaux) and Wadim Zudilin (Nijmegen)}

\date{3 November 2025}

\maketitle

\begin{abstract}
Ap\'ery's remarkable discovery of rapidly converging continued fractions
with small coefficients for $\zeta(2)$ and $\zeta(3)$ has led to a
wealth of important activity in an incredible variety of different directions.
Our purpose is to show that modifications of Ap\'ery's continued fractions
can give interesting results including new rapidly convergent continued
fractions for certain interesting constants.
\end{abstract}

\section{Continued fractions from Ap\'ery's acceleration}
\label{sec:intro}

Ap\'ery's remarkable discovery of rapidly converging continued fractions
(abbreviated CFs) with small coefficients for $\z(2)$ and $\z(3)$ has led to a
flurry of important activity in an incredible variety of different directions.

Our purpose here is to show that modifications of Ap\'ery's CFs
can give interesting results. The reader is warned, however, that
no new irrationality results are given\,---\,the paper is only concerned with
new elegant formulas.
Nevertheless we touch arithmetic aspects of our findings whenever appropriate.

We first recall Ap\'ery's two CFs, together with the classical one for
$\log(2)$, at the same time introducing the self-explanatory notation that we
use for CFs:

\begin{theorem}
\label{th:Ap}
We have the following CFs, together with their speed of convergence:
\begin{enumerate}
\item[$\bullet$] Tiny Ap\'ery
\begin{gather*}
\log(2)
=[[0,3(2n-1)],[2,-n^2]]
=\dfrac{2}{3-\dfrac{1}{9-\dfrac{4}{15-\dfrac{9}{21-\dfrac{16}{27-{\atop\ddots}}}}}},
\\
\log(2)-\dfrac{p(n)}{q(n)}\sim\dfrac{2\pi}{(1+\sqrt{2})^{4n+2}};
\end{gather*}
\item[$\bullet$] Small Ap\'ery
\begin{gather*}
\z(2)=[[0,11n^2-11n+3],[5,n^4]]=\dfrac{5}{3+\dfrac{1}{25+\dfrac{16}{69+\dfrac{81}{135+\dfrac{256}{223+{\atop\ddots}}}}}},
\\
\z(2)-\dfrac{p(n)}{q(n)}\sim(-1)^n\dfrac{4\pi^2}{((1+\sqrt{5})/2)^{10n+5}};
\end{gather*}
\item[$\bullet$] Big Ap\'ery
\begin{gather*}
\z(3)
=[[0,(2n-1)(17n^2-17n+5)],[6,-n^6]]
=\dfrac{6}{5-\dfrac{1}{117-\dfrac{64}{535-\dfrac{729}{1463-{\atop\ddots}}}}},
\\
\z(3)-\dfrac{p(n)}{q(n)}\sim\dfrac{4\pi^3}{(1+\sqrt{2})^{8n+4}}.
\end{gather*}
\end{enumerate}
\end{theorem}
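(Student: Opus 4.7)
All three CFs fit a single template---the Ap\'ery construction---and I would prove them in parallel by: (a) producing explicit sequences $p(n),q(n)\in\Q$ as binomial sums, (b) verifying that both satisfy the three-term recurrence $u_{n+1}=a_{n+1}u_n+b_{n+1}u_{n-1}$ dictated by the CF (using the classical identification of the numerator and denominator of the convergents with the two independent solutions of such a recurrence), (c) showing $p(n)/q(n)\to L$, and (d) extracting the asymptotic rate from Poincar\'e--Perron.

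\textbf{The sequences.} For $\zeta(3)$ take Ap\'ery's $q(n)=\sum_{k=0}^n\binom{n}{k}^2\binom{n+k}{k}^2$ and the companion $p(n)$ obtained from the same binomial kernel weighted by partial harmonic sums; for $\zeta(2)$ replace one factor $\binom{n+k}{k}$ by~$1$; for $\log 2$ use a still simpler Legendre-polynomial-type sum. These are, up to a unimodular normalization, the numerator and denominator of the $n$th convergent of the displayed CF. The limit $p(n)/q(n)\to L$ would be proved through the Beukers-type integral representations, e.g.\
\[
q(n)\zeta(3)-p(n)=\pm\iiint_{[0,1]^3}\frac{\bigl(x(1-x)y(1-y)z(1-z)\bigr)^n}{(1-(1-xy)z)^{n+1}}\,\d x\,\d y\,\d z,
\]
together with its known analogues for $\zeta(2)$ (a double integral) and $\log 2$ (a single integral); each of these remainders tends to $0$ exponentially.

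\textbf{Speed of convergence.} For step~(d), the three recurrences become asymptotically $v_{n+1}=6v_n-v_{n-1}$ (for $\log 2$, after the substitution $u_n=n!\,v_n$ needed to make the coefficients asymptotically constant), $u_{n+1}=11u_n+u_{n-1}$ (for $\zeta(2)$), and $u_{n+1}=34u_n-u_{n-1}$ (for $\zeta(3)$). Their characteristic roots satisfy $\lambda_+\lambda_-=\pm1$ and give $|\lambda_-/\lambda_+|=(1+\sqrt2)^{-4}$, $((1+\sqrt5)/2)^{-10}$, $(1+\sqrt2)^{-8}$ respectively. Since $q(n)$ follows the dominant solution and $q(n)L-p(n)$ the minimal one, the error $L-p(n)/q(n)$ decays like $|\lambda_-/\lambda_+|^n$, matching the exponents in the statement. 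The constant prefactors $2\pi$, $4\pi^2$, $4\pi^3$ and the additive shifts ``$+2$'', ``$+5$'', ``$+4$'' in the exponent come out of Laplace/saddle-point evaluation of the Beukers-type integrals.

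\textbf{Main obstacle.} The hardest ingredient is step~(b) applied to $p(n)$: showing that the harmonically weighted binomial sum satisfies the same recurrence as $q(n)$. For $q(n)$ this is routine Zeilberger creative telescoping, but for $p(n)$ it is Ap\'ery's original ``miracle.'' The cleanest unified disposal is to derive the recurrence abstractly from the integral representation above by integration-by-parts manipulations in the parameter~$n$, so that a single lemma handles all three cases at once and simultaneously supplies the constants needed for the asymptotics in~(d).
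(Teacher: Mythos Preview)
Your proposal is correct and complete in outline, but it takes a genuinely different route from the paper's. The paper does not construct $p(n)$ and $q(n)$ as binomial sums at all; instead it invokes Ap\'ery's \emph{acceleration} procedure (the Bauer--Muir type CF transformation described in~\cite{Co24a} and implemented as \texttt{cfapery}) applied directly to the Euler CFs of the defining series $\sum(-1)^{k-1}/k$, $\sum 1/k^2$, $\sum 1/k^3$. That is, the paper treats the theorem as the output of a CF-to-CF algorithm, with the limit inherited automatically from the input series and no explicit closed forms or Beukers integrals ever written down. Your approach---explicit binomial sums, verification of the recurrence by creative telescoping, Beukers integrals for the limit and for the asymptotic constants, Poincar\'e--Perron for the exponential rate---is the classical ``arithmetic'' proof going back to Ap\'ery and Beukers themselves. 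It buys you the arithmetic of $p(n),q(n)$ (denominators $d_n^k$, connection to irrationality) essentially for free and makes the constants $2\pi,4\pi^2,4\pi^3$ accessible via saddle-point; the paper's acceleration approach buys uniformity and automation, which is the point of the article since the same \texttt{cfapery} machinery is then reused throughout to produce the new CFs in Sections~\ref{sec:sA}--\ref{sec:tA}. One small remark: for $\zeta(2)$ and $\zeta(3)$ you also need a factorial rescaling (by $(n!)^2$ and $(n!)^3$ respectively) before the recurrence has asymptotically constant coefficients, just as you note for $\log 2$; you state the normalized recurrences $u_{n+1}=11u_n+u_{n-1}$ and $u_{n+1}=34u_n-u_{n-1}$ directly, which is fine but worth flagging.
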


\begin{proof}
Ap\'ery's acceleration method is now well-known at least to experts,
and its friendly description in great detail can be found in \cite{Co24a}.
It is also available in the GIT branch {\tt henri-ellCF2} of {\tt Pari/GP} as
{\tt cfapery}, which automates the method and gives the Ap\'ery accelerate,
and if desired the acceleration arrays.

Thus, it suffices to say that the above three CFs are directly obtained
by using this command (together with the {\tt cfsimplify} command) from the
standard slowly convergent series
\[
\log(2)=\sum_{k=1}^\infty\frac{(-1)^{k-1}}k, \quad
\z(2)=\sum_{k=1}^\infty\frac1{k^2}, \quad\text{and}\quad
\z(3)=\sum_{k=1}^\infty\frac1{k^3},
\]
respectively.
\end{proof}

Numerous variations on these CFs have been given in the literature, and
we again refer to \cite{Co24a} for many examples.

The two variations that we will study in this paper are first, a \emph{shift}
of the variable $n$ of the CF (i.e., replacing $n$ by $n+\alpha$ for
some $\alpha$). Shifting by an integer changes the CF into an equivalent
one, so is not interesting. The next simplest shift is with $\alpha=1/2$.

The second variation is to obtain a \emph{continuous} generalisation of the
CF, in other words a CF depending on a variable $z$ which specialises to
one of Ap\'ery's for $z=0$.

Shifting is, of course, a trivial transformation, but essentially the only
difficulty is in recognising the limit of the new CF. We do not know any
systematic way of doing that. The difficulty is compounded by the fact that
modifying a finite number of coefficients of a CF gives a limit which is
a M\"obius transformation of the initial limit, so the latter is only defined up to such a rational transformation.

The results contained in this paper have been found only because of the existence
at our disposal of a huge encyclopedia of CFs that the first author has made available
on \href{arxiv.org}{\texttt{arXiv}} as~\cite{Co24b}. We have simply ``looked'' (modulo some elementary
manipulations) at this encyclopedia to see if we could find the new CF
or something close.

\section{Small Ap\'ery}
\label{sec:sA}

Using $n\mapsto n+1/2$ on small Ap\'ery and removing denominators gives the
CF \ $S=[[0,44n^2+1],[20,(2n+1)^4]]$. Using the encyclopedia \cite{Co24b}, one immediately
finds that
$S=-10+800(\G(1/4)/\G(3/4))^4$, or equivalently:

\begin{theorem}
\label{th:sA}
We have
\begin{align*}
\left(\frac{\G(1/4)}{\G(3/4)}\right)^4
&=[[80,47,44n^2+1],[-160,(2n+1)^4]]
\\
&=80-\dfrac{160}{47+\dfrac{3^4}{44\cdot2^2+1+\dfrac{5^4}{44\cdot3^2+1+\dfrac{7^4}{44\cdot4^2+1+\dfrac{9^4}{44\cdot5^2+1+{\atop\ddots}}}}}}
\end{align*}
with speed of convergence
\[
\left(\frac{\G(1/4)}{\G(3/4)}\right)^4-\dfrac{p(n)}{q(n)}
\sim(-1)^{n+1}\dfrac{8(\G(1/4)/\G(3/4))^4}{((1+\sqrt{5})/2)^{10n+10}}.
\]
\end{theorem}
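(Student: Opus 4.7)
The plan is to execute four steps: (a) derive the shifted CF $S$ by substitution and denominator-clearing; (b) establish the identity $(\G(1/4)/\G(3/4))^4=800/(S+10)$; (c) implement this M\"obius relation at the level of the CF to reach the displayed form; and (d) extract the convergence rate from the governing three-term recurrence.

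Step~(a) is routine. Substituting $n\mapsto n+1/2$ into the partial denominators $11n^2-11n+3$ and numerators $n^4$ of small Ap\'ery (Theorem~\ref{th:Ap}) gives $11n^2+1/4$ and $(2n+1)^4/16$ respectively. The standard CF-equivalence transformation with constant scaling $\lambda_n\equiv4$ then clears denominators: the generic data becomes $44n^2+1$ and $(2n+1)^4$, while the leading numerator $b_0=5$ becomes $\lambda_1 b_0=20$, producing $S=[[0,44n^2+1],[20,(2n+1)^4]]$.

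Step~(b) is the main obstacle. Since the half-integer shift does not come from Ap\'ery acceleration of any standard series, the value of $S$ must be identified by independent means. The rigorous analytic route is to express the convergents $P_n,Q_n$ of $S$ explicitly\,---\,for example as hypergeometric sums or Beukers-type double integrals with half-integer parameters\,---\,and to evaluate $\lim P_n/Q_n$ via classical $\G$-identities (reflection, duplication, Chowla--Selberg-type) that link half-integer hypergeometrics to the lemniscate constants $\G(1/4)$, $\G(3/4)$. A more pragmatic route is to search Cohen's encyclopedia~\cite{Co24b} for an entry matching $S$ up to an elementary M\"obius transformation, then cite the encyclopedia's own proof of the closed form. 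Either way the outcome is $(\G(1/4)/\G(3/4))^4 = 800/(S+10)$; the essential difficulty is the leap from the recurrence coefficients $44n^2+1$, $(2n+1)^4$ to a lemniscatic closed form, for which no a priori reason is visible, and any analytic approach requires identifying the right hypergeometric or modular identity to force the $\G(1/4)$-expression to appear.

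Step~(c) is a direct CF manipulation. Writing $S=20/(45+t)$ with $t=81/(177+625/(397+\cdots))$ the generic tail, one computes $S+10=10(47+t)/(45+t)$ and hence $800/(S+10)=80(45+t)/(47+t)=80-160/(47+t)$, which is exactly the CF $[[80,47,44n^2+1],[-160,(2n+1)^4]]$: the generic data for $n\ge2$ is preserved while the first two partial quotients absorb the M\"obius transformation. For step~(d), Poincar\'e--Perron applied to the three-term recurrence $u_n=(44n^2+1)u_{n-1}+(2n-1)^4u_{n-2}$ satisfied by $P_n,Q_n$\,---\,after polynomial normalisation $u_n=((2n+1)!!)^2 w_n$\,---\,reduces to leading order to $w_n\sim 11w_{n-1}+w_{n-2}$, the same Ap\'ery form. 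The characteristic roots $\phi^5$ and $-\phi^{-5}$ (with $\phi=(1+\sqrt{5})/2$) give the error decay $(-1)^n\phi^{-10n}$, and matching leading constants against the limit from step~(b) produces the stated asymptotic $(-1)^{n+1}\cdot 8(\G(1/4)/\G(3/4))^4/\phi^{10n+10}$.
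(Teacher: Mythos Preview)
Your outline is sound and steps~(a), (c), (d) are correct (the M\"obius computation in~(c) checks, and the Poincar\'e--Perron normalisation in~(d) indeed recovers the characteristic equation $w_n=11w_{n-1}+w_{n-2}$ with roots $\phi^5,-\phi^{-5}$). Step~(b), which you rightly flag as the crux, is only sketched. Your suggested analytic route\,---\,express $P_n/Q_n$ through a Beukers-type double integral with half-integer exponents and evaluate the initial values via Chowla--Selberg\,---\,is in fact the paper's \emph{second} argument, carried out explicitly there via $I_2(n)=(-1)^n\iint_{[0,1]^2}x^{n-1/2}(1-x)^{n-1/2}y^{n-1/2}(1-y)^{n-1/2}(1-xy)^{-n-1/2}\,\d x\,\d y$, with $I_2(0)=(\pi/2)\Gamma(1/4)^2/\Gamma(3/4)^2$ and $I_2(1)$ computed directly.

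The paper's \emph{primary} proof, however, runs in the opposite direction and sidesteps the limit-identification entirely: it begins from a Ramanujan continued fraction $(\Gamma(1/4)/\Gamma(3/4))^4=[[80,17,16n],[-64,(2n+1)^4]]$ whose value is already known, applies Ap\'ery acceleration to \emph{that}, and observes that the resulting period-$2$ CF simplifies ``miraculously'' to the target $[[80,47,44n^2+1],[-160,(2n+1)^4]]$. So the paper transforms a CF of known value into the desired one, whereas you construct the desired CF first (as a half-shift) and then face the problem of recognising its value. Your route is the more natural one given how the CF arises, but it defers rather than eliminates the hard input (Chowla--Selberg or, equivalently, Ramanujan's identity); the paper's route buys a self-contained derivation once the starting Ramanujan CF is granted.
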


\begin{proof}
Using for instance Ramanujan's CFs for quotients of products of gamma
functions, we begin by showing that
\[
\left(\dfrac{\G(1/4)}{\G(3/4)}\right)^4=[[80,17,16n],[-64,(2n+1)^4]].
\]
Note incidentally that this is equivalent via Euler's transformation to
the hypergeometric identities:
\begin{align*}\left(\dfrac{\G(1/4)}{\G(3/4)}\right)^4&=80-\dfrac{2048}{625}\sum_{n\ge0}\dfrac{(n+1)(3/4)_n^4}{(9/4)_n^4}\\
\left(\dfrac{\G(3/4)}{\G(1/4)}\right)^4&=\dfrac{1}{16}-\dfrac{4}{81}\sum_{n\ge0}\dfrac{(2n+1)(1/4)_n^4}{(7/4)_n^4}\;,\end{align*}
where as usual $(a)_n$ denotes the rising Pochhammer symbol.

We now apply Ap\'ery's acceleration \cite{Co24a} on the above CF to obtain an
apparently complicated period~2 CF, and\,---\,the miracle occurs,
this simplifies to the CF of the theorem. To our knowledge, it is only with
this specific CF (or its equivalents) for $(\G(1/4)/\G(3/4))^4$ that the
miracle occurs.
\end{proof}

The result can be alternatively stated for the numerators $p(n)$ and denominators $q(n)$ of the convergents as the (common) difference equation
\begin{equation}
(2n+1)^2p(n+1)=(44n^2+1)p(n)+(2n-1)^2p(n-1)
\quad\text{for}\; n=1,2,\dotsc.
\label{eq1}
\end{equation}
The substitution of $n+\frac12$ for $n$ in equation \eqref{eq1} (and division of the result by~4) gives
\[
(n+1)^2p(n+3/2)=(11n^2+11n+3)p(n+1/2)+n^2p(n-1/2),
\]
which can be immediately recognised as the small Ap\'ery recursion for the ``half-shift'' $a(n)=p(n+1/2)$.

Unlike their half-brothers, the common denominator of solutions $p(n)$ and $q(n)$ of \eqref{eq1} is essentially the square of the least common multiple of $1,3,5,\dots,2n-1$, the latter growing roughly like the $n$th power of $e^4$; this is much too large compared with $((\sqrt5+1)/2)^5=e^{2.4060591\dots}$ to
be used for irrationality.

One more way of establishing Theorem~\ref{th:sA}, again motivated by the connection with small Ap\'ery, is through casting rational solutions $p(n)$ and $q(n)$ of the difference equation \eqref{eq1} (actually, their slight modification) from the Euler--Beukers integral (see \cite{Be79})
\[
I_2(n)=(-1)^n\iint\limits_{[0,1]^2}\frac{x^{n-1/2}(1-x)^{n-1/2}y^{n-1/2}(1-y)^{n-1/2}}{(1-xy)^{n+1/2}}\,\d x\,\d y.
\]
Indeed, running creative telescoping,%
\footnote{One can use the {\tt Mathematica} package {\tt HolonomicFunctions} developed by Koutchan~\cite{Ko10}
directly on the multiple integrals.
Alternatively, Nesterenko's general theorem \cite[Theorem~2]{Ne03}
converts such integrals into \emph{single} hypergeometric sums or
Barnes-type hypergeometric integrals which can be treated by
{\tt SumTools[Hypergeometric][Zeilberger]} in {\tt Maple};
there are implementations of Zeilberger's creative telescoping algorithm \cite{PWZ96}
for single sums and integrals in {\tt sage} as well.}
which is a standard satellite of Ap\'ery's acceleration and whose development \cite{PWZ96} was very much influenced by Ap\'ery's discovery~\cite{Ap79,Po79},
we see that the integrals satisfy the same recursion~\eqref{eq1} for $n=1,2,\dots$\,.
The closed form evaluation
\[
I_2(0)=\pi^2\cdot{}_3F_2\bigg( \begin{matrix} \frac12, \, \frac12, \, \frac12 \\ 1, \, 1 \end{matrix}\biggm| 1\bigg)
=16L(\eta_4^6,2)=\frac\pi{2}\,\frac{\Gamma(1/4)^2}{\Gamma(3/4)^2}
\]
follows from the Chowla--Selberg formula; here $\eta_m=\eta(m\tau)$ with $\eta(\,\cdot\,)$ standing for the Dedekind eta function.
With more (standard!) work one further finds
\[
I_2(1)=\frac\pi{4}\,\frac{\Gamma(1/4)^2}{\Gamma(3/4)^2}-20\pi\,\frac{\Gamma(3/4)^2}{\Gamma(1/4)^2},
\]
so that, more generally,
\[
I_2(n)=\tilde p(n)\cdot\pi\,\frac{\Gamma(3/4)^2}{\Gamma(1/4)^2}
-\tilde q(n)\cdot\pi\,\frac{\Gamma(1/4)^2}{\Gamma(3/4)^2}
\]
with $\tilde p(n)$ and $\tilde q(n)$ solving \eqref{eq1} as well.
Now the quantity $(\G(1/4)/\G(3/4))^4$ featured in the theorem above appears as the quotient of the two periods in the latter linear combinations.

\begin{remark}
In a somewhat different context motivated by considerations in \cite{GZ21}, the shift \eqref{eq1} of small Ap\'ery's difference equation is featured in~\cite{OS19}.
\end{remark}

\section{Big Ap\'ery}
\label{sec:bA}

If we perform the similar shift for the big Ap\'ery recursion
\begin{equation}
(n+1)^3A(n+1)=(2n+1)(17n^2+17n+5)A(n)-n^3A(n-1),
\label{eq2a}
\end{equation}
the result is the difference equation
\begin{equation}
(2n+1)^3p(n+1)=4n(68n^2+3)p(n)-(2n-1)^3p(n-1)
\label{eq2}
\end{equation}
for the numerators $p(n)$ and denominators $q(n)$ of the convergents of CF which we display below in Theorem~\ref{th:bA}.

Note that such half-shifts have been already practised in \cite{GZ21} for the big Ap\'ery numbers
\begin{equation}
A_n=\sum_{k=0}^n\binom{n}{k}^2\binom{n+k}{k}^2 \quad\text{for}\; n=0,1,2,\dots,
\label{eq:bAn}
\end{equation}
which solve the difference equation \eqref{eq2a} but \emph{a priori} only give one solution.
The recursion for $A_{n-1/2}$ in \cite{GZ21} is different from \eqref{eq2}, as it is inhomogeneous;
remarkably enough (as we will see below) the two are in many ways complementary to each other.

Following the last recipe of Section~\ref{sec:sA} we first look for an integral solution of~\eqref{eq2}; this time it is given by the Beukers triple integral \cite{Be79}
\[
I_3(n)=\iiint\limits_{[0,1]^3}\frac{x^{n-1/2}(1-x)^{n-1/2}y^{n-1/2}(1-y)^{n-1/2}z^{n-1/2}(1-z)^{n-1/2}}{(1-(1-xy)z)^{n+1/2}}\,\d x\,\d y\,\d z
\]
for $n=0,1,2,\dots$,
where creative telescoping was employed again.
The integrals of this type, with sufficiently generic (complex) exponents replacing $n\pm1/2$, are a subject of \cite{Ne03}; they can be translated into Barnes-type hypergeometric integrals, which in turn happen to be particular instances of Meijer's $G$-functions.
We take a look at a generalisation of $I_3(n)$ in Section~\ref{sec:RV}.
The integrals can be written as integrals of $_3F_2$-hypergeometric functions as well.
For example,
\begin{align*}
I_3(0)
&=\iiint\limits_{[0,1]^3}\frac{x^{-1/2}(1-x)^{-1/2}y^{-1/2}(1-y)^{-1/2}z^{-1/2}(1-z)^{-1/2}}{(1-z)^{1/2}(1+xyz/(1-z))^{1/2}}\,\d x\,\d y\,\d z
\\
&=\pi^2
\int_0^1\frac{z^{-1/2}}{1-z}\,{}_3F_2\bigg(\begin{matrix} \frac12, \, \frac12, \, \frac12 \\ 1, \, 1 \end{matrix} \biggm| \frac{-z}{1-z} \bigg)\,\d z
\\ &
=\pi^2
\int_0^{\infty}Z^{-1/2}(1+Z)^{-1/2}\,{}_3F_2\bigg(\begin{matrix} \frac12, \, \frac12, \, \frac12 \\ 1, \, 1 \end{matrix} \biggm| -Z \bigg)\,\d Z.
\end{align*}
With the help of the (standard) modular parametrisation of the underlying $_3F_2$ we recover the latter quantity as the $L$-value
\[
I_3(0)=64L(\eta_2^4\eta_4^4,3)
\]
of a Hecke $\Gamma_0(8)$ eigenform of weight~4.
This makes this story intertwine with the one in \cite{GZ21}, where the initial value $A_{-1/2}$ of the solution of inhomogeneous equation is identified as
\begin{align*}
\pi^3A_{-1/2}
&=16\pi L(\eta_2^4\eta_4^4,2)
=\pi^3\,{}_4F_3\bigg(\begin{matrix} \frac12, \, \frac12, \, \frac12, \, \frac12 \\ 1, \, 1, \, 1 \end{matrix} \biggm| 1 \bigg)
\\
&=\iiint\limits_{[0,1]^3}\frac{x^{-1/2}(1-x)^{-1/2}y^{-1/2}(1-y)^{-1/2}z^{-1/2}(1-z)^{-1/2}}{(1-xyz)^{1/2}}\,\d x\,\d y\,\d z.
\end{align*}
In other words, the two integrals for $I_3(0)$ and $\pi^3A_{-1/2}$ happen to be multiple of two periods
\begin{align*}
\omega_+
=8L(\eta_2^4\eta_4^4,3)
&= 6.9975630166806323595567578268530960\dots,
\\
\omega_-
=4\pi iL(\eta_2^4\eta_4^4,2)
&= 8.6711873312659436466050308394689215\dots\,i
\end{align*}
attached to the eigenform $\eta_2^4\eta_4^4$.
With further insight from \cite{GZ21} (complemented by details in \cite{BKSZ24}) and with harder work, again using the modular parametrisation of the underlying $_3F_2$, we find that
\[
I_3(1)=-56\omega_+-\frac32\eta_+,
\] 
where $\eta_+$ is one of the two quasiperiods
\begin{align*}
\eta_+
&=-261.3739159094042031485947045700717759\dots,
\\
\eta_-
&=-359.3354423254855950047613470695853950\dots\,i
\end{align*}
for the same eigenform, whose numerical values we borrow from \cite[Table~12]{BKSZ24}.
The discussion above shows that $I_3(n)$ is a $\mathbb Q$-linear combination of $\omega_+$ and $\eta_+$ for any $n\ge0$, thus leading to the following result:

\begin{theorem}
\label{th:bA}
We have
\begin{align*}
-3\frac{\eta_+}{\omega_+}
&=[[112,4n(68n^2+3)],[16,-(2n+1)^6]]
\\
&=112+\dfrac{16}{4\cdot(68\cdot1^2+3)-\dfrac{3^6}{8\cdot(68\cdot2^2+3)-\dfrac{5^6}{12\cdot(68\cdot3^2+3)-\dfrac{7^6}{16\cdot(68\cdot4^2+3)-{\atop\ddots}}}}}
\end{align*}
with speed of convergence
\[
-3\frac{\eta_+}{\omega_+}-\frac{p(n)}{q(n)}
\sim\frac{192\pi^3/\omega_+^2}{(1+\sqrt2)^{8n+8}}.
\]
\end{theorem}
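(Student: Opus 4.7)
The proof closely parallels the recipe outlined at the end of Section~\ref{sec:sA}, using the integral $I_3(n)$ together with its period/quasi-period evaluation as the key input. First, we certify that $I_3(n)$ satisfies the three-term recursion \eqref{eq2} for $n\ge 1$ by applying creative telescoping directly to the integrand, either via Koutschan's \texttt{HolonomicFunctions} package, or, following Nesterenko~\cite{Ne03}, by translating $I_3(n)$ into a Barnes-type hypergeometric integral and invoking Zeilberger's algorithm.

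Second, we evaluate $I_3(0)$ and $I_3(1)$ in closed form. The identity $I_3(0)=64L(\eta_2^4\eta_4^4,3)=8\omega_+$ follows from the $_3F_2$-reduction of $I_3(0)$ displayed in the text, combined with the standard modular parametrization of $_3F_2\bigl(\tfrac12,\tfrac12,\tfrac12;1,1\mid z\bigr)$ used in \cite{GZ21,BKSZ24}. The subtler evaluation $I_3(1)=-56\omega_+-\tfrac{3}{2}\eta_+$ uses the same modular setup but now tracks the contribution of the period of the second kind, from which $\eta_+$ emerges; the numerical value of $\eta_+$ from \cite[Table~12]{BKSZ24} serves as a high-precision cross-check. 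Since $\omega_+$ and $\eta_+$ are $\mathbb{Q}$-linearly independent, these two data propagate via \eqref{eq2} to give $I_3(n)=A(n)\,\omega_++B(n)\,\eta_+$ for all $n\ge 0$, with $A(n),B(n)\in\mathbb{Q}$ spanning the two-dimensional solution space of \eqref{eq2}.

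Third, we extract the CF value. The CF convergent numerators and denominators $p(n),q(n)$, after the standard renormalization that makes them satisfy \eqref{eq2}, span the same solution space. By the Poincar\'e--Perron theorem the characteristic roots of \eqref{eq2} are $(1\pm\sqrt{2})^4$, so the decay estimate $I_3(n)=O((1+\sqrt{2})^{-4n})$, immediate from the integrand, identifies $I_3(n)$ as the subdominant solution while $q(n)$ is dominant. Matching the initial values $I_3(0),I_3(1)$ against the CF data $p(0),q(0),p(1),q(1)$ then forces $\lim_{n\to\infty}p(n)/q(n)=-3\eta_+/\omega_+$; the claimed asymptotic prefactor $192\pi^3/\omega_+^2$ comes from the Casorati of \eqref{eq2} combined with the leading constant in the subdominant asymptotic of $I_3(n)$.

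The main obstacle is the evaluation of $I_3(1)$: extracting the precise rational coefficient $-\tfrac{3}{2}$ of the quasi-period $\eta_+$ requires the full modular-forms apparatus of \cite{GZ21,BKSZ24}, going beyond the single $L$-value identity that handles $I_3(0)$, and this is where the bulk of the technical work lies.
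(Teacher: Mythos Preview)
Your proposal is correct and follows essentially the same route as the paper: the argument preceding the theorem statement already establishes that $I_3(n)$ satisfies~\eqref{eq2} via creative telescoping and records the closed forms $I_3(0)=8\omega_+$, $I_3(1)=-56\omega_+-\tfrac32\eta_+$, while the formal proof then identifies the CF limit (obtained as the half-shift of big Ap\'ery) from these data. Your write-up is somewhat more explicit about the Poincar\'e--Perron/Casorati mechanics behind extracting both the limit and the error constant, but the substance is the same.
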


\begin{proof}
Using $n\mapsto n+1/2$ on big Ap\'ery and removing denominators gives the
new CF \ $S=[[0,4n(68n^2+3)],[48,-(2n+1)^6]]$, whose limit is equal to
\[
0.16921170657881854838709526498834093533256251638822745276659373255666458\dots
\]
and identified with $-336-9\eta_+/\omega_+$ on the basis of the difference equation \eqref{eq2} and initial values $I_3(0),I_3(1)$ of its solution.
\end{proof}

\begin{remark}
Nesterenko's \cite[Theorem 2]{Ne03} casts the integral
\[
r(n)=\iiint\limits_{[0,1]^3}\bigg(\frac{x(1-x)y(1-y)z(1-z)}{1-(1-xy)z}\bigg)^n\,\frac{\d x\,\d y\,\d z}{1-(1-xy)z}
\]
as the Barnes-type hypergeometric integral
\[
r(n)=\frac1{2\pi i}\int_{-i\infty}^{+i\infty}\frac{\Gamma(s+n+1)^4\Gamma(-s)^2}{\Gamma(s+2n+2)^2}\,\d s,
\]
where the path of integration separates the increasing sequence of poles of $\Gamma(-s)$ from the decreasing one of $\Gamma(s+n+1)$,
for any \emph{complex} $n$ with $\operatorname{Re}(n)>-1$.
The integral can be recognised as a Meijer $G$-function,
\[
r(n)
=G^{2,4}_{4,4}\bigg(1 \biggm| \begin{matrix} -n, \, -n, \, -n, \, -n \\ 0, \, 0, \, 1-2n, \, 1-2n \end{matrix}\bigg)
=G^{4,2}_{4,4}\bigg(1 \biggm| \begin{matrix} -n, \, -n, \, n+1, \, n+1 \\ 0, \, 0, \, 0, \, 0 \end{matrix}\bigg)
\]
(see also \cite{Ne96}), and this provides us with analytic continuation of $r(n)$ to an entire function of $n\in\mathbb C$.
In analogy with \cite[Theorem 1]{GZ21}, this entire function satisfies the \emph{homogeneous} difference equation~\eqref{eq2a}
for all $n\in\mathbb C$ and is symmetric under $n\mapsto-n-1$.
Unlike the situation covered in \cite{GZ21}, it interpolates Ap\'ery's approximations to $\zeta(3)$ rather than the big Ap\'ery numbers \eqref{eq:bAn} which only satisfy the recursion for $n\in\mathbb Z$.
\end{remark}

\section{Half-shifts and Bessel moments}
\label{sec:bessel}

In the spirit of Theorem~\ref{th:sA}, we can start from the known CF
\[
L(\chi_{-3},2)=[[0,10n^2-10n+3],[2,-9n^4]],
\]
where $L(\chi_{-3},2)=\sum_{n\ge1}\big(\frac{-3}{n}\big)/n^2=1-1/2^2+1/4^2-1/5^2+\cdots$,
and again use $n\mapsto n+1/2$ and remove denominators. Using \cite{BBBC07}
and \cite{BBBG08} we find the following:

\begin{theorem}
We have
\begin{align*}
2^{-1/3}\left(\frac{\G(1/3)}{\G(2/3)}\right)^6
&=[[36,33,40n^2+2],[324,-9(2n+1)^4]]
\\
&=36+\dfrac{324}{33-\dfrac{9\cdot3^4}{40\cdot2^2+2-\dfrac{9\cdot5^4}{40\cdot3^2+2-\dfrac{9\cdot7^4}{40\cdot4^2+2-\dfrac{9\cdot9^4}{40\cdot5^2+2-{\atop\ddots}}}}}}
\end{align*}
with speed of convergence
\[
2^{-1/3}\left(\frac{\G(1/3)}{\G(2/3)}\right)^6-\dfrac{p(n)}{q(n)}
\sim\dfrac{4\cdot2^{-1/3}(\G(1/3)/\G(2/3))^6}{3^{2n+2}}.
\]
\end{theorem}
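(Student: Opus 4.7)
The plan is to implement the two-step recipe announced just before the theorem: start from the CF for $L(\chi_{-3},2)=[[0,10n^2-10n+3],[2,-9n^4]]$, perform the half-shift $n\mapsto n+1/2$ and clear denominators, and then identify the limit of the resulting CF by the same type of period/integral argument that underpins Theorems~\ref{th:sA} and~\ref{th:bA}.

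\emph{Step 1 (producing the CF).} Substituting $n\mapsto n+1/2$ into $10n^2-10n+3$ and $-9n^4$ and then using the standard CF-equivalence transformation to clear the resulting factors of $1/4$ and $1/16$ from $a_n$ and $b_n$ respectively produces, mechanically, the shifted CF $[[?,?,40n^2+2],[?,-9(2n+1)^4]]$, with the leading entries $36,33,324$ computed directly. In practice this is all carried out by the \texttt{cfapery}/\texttt{cfsimplify} commands of the \texttt{henri-ellCF2} branch of \texttt{Pari/GP}.

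\emph{Step 2 (identifying the limit).} In the spirit of Section~\ref{sec:sA}, introduce a Beukers-type double integral adapted to the character $\chi_{-3}$,
\[
J(n)=\iint\limits_{[0,1]^2}\frac{x^{n-1/3}(1-x)^{n-2/3}\,y^{n-1/3}(1-y)^{n-2/3}}{(1-xy)^{n+c}}\,\d x\,\d y
\]
(for a suitable shift constant $c$, possibly after a mild change of variables), and verify by creative telescoping\,---\,directly via \texttt{HolonomicFunctions} \cite{Ko10}, or after recasting $J(n)$ as a Barnes integral \`a la Nesterenko \cite{Ne03} and then applying Zeilberger \cite{PWZ96}\,---\,that $J(n)$ satisfies the same three-term recursion as the convergents $p(n),q(n)$. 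A second independent solution is produced by analytic continuation, so that $p(n),q(n)$ span the two-dimensional solution space as linear combinations, with rational-in-$n$ coefficients, of two underlying periods $\pi_1,\pi_2$. These periods are evaluated in closed form from $J(0)$ and $J(1)$ via the Chowla--Selberg formula together with the cubic Bessel-moment tables of \cite{BBBC07,BBBG08}: they come out as rational multiples of $\pi\,\G(1/3)^2/\G(2/3)^2$ and $\pi\,\G(2/3)^2/\G(1/3)^2$, with a characteristic factor $2^{-1/3}$ inherited from the ${}_3F_2(1)$-evaluation over the cubic field. Taking the quotient $\pi_1/\pi_2$ then recovers $2^{-1/3}(\G(1/3)/\G(2/3))^6$ as the limit of the CF.

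\emph{Main obstacle and convergence rate.} Step~1 is routine, and the recursion in Step~2 follows by mechanical creative telescoping; the genuine difficulty, exactly as in Theorems~\ref{th:sA} and~\ref{th:bA}, is the \emph{miracle of recognition}, since the M\"obius-equivalence of CFs leaves a three-parameter family of candidate closed forms. In practice this is resolved by a numerical search in the encyclopedia \cite{Co24b} followed by rigorous certification through the period analysis above. Finally, the announced speed of convergence follows from Poincar\'e's theorem applied to the large-$n$ recursion $p_n\sim40n^2p_{n-1}-144n^4p_{n-2}$: writing $p_n\sim\rho^n(n!)^2$ yields the characteristic equation $\rho^2-40\rho+144=0$ with roots $36$ and $4$, and the subdominant-to-dominant ratio $4/36=1/9$ gives the decay $1/9^n=1/3^{2n}$, while the prefactor $4\cdot2^{-1/3}(\G(1/3)/\G(2/3))^6$ is read off from the period decomposition.
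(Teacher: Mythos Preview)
Your Step~1 and the convergence-rate computation at the end are fine, but Step~2 has a genuine gap, and your route diverges from the paper's.

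The paper's proof does not use a Beukers-type double integral at all. It quotes directly from \cite{BBBC07,BBBG08}: setting $c_{n,k}=\int_0^\infty t^kK_0^n(t)\,\d t$, those references prove a CF for $c_{3,2}/c_{3,0}$ which is (equivalent to) the CF of the theorem, together with the closed forms
\[
c_{3,0}=\frac{\sqrt{3}\pi^3}{8}\,{}_3F_2\bigg(\begin{matrix}\tfrac12,\tfrac12,\tfrac12\\1,1\end{matrix}\biggm|\frac14\bigg)=\frac{3\G(1/3)^6}{32\cdot2^{2/3}\pi},
\qquad
c_{3,2}=\frac19 c_{3,0}-\frac{\pi^4}{24}c_{3,0}^{-1}.
\]
Combining these immediately gives the value $2^{-1/3}(\G(1/3)/\G(2/3))^6$ as a M\"obius transform of $c_{3,2}/c_{3,0}$.

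Your proposed integral $J(n)$ with exponents $n-1/3$, $n-2/3$ is the wrong object. A half-shift $n\mapsto n+1/2$ of the CF corresponds to a half-shift in the parameters of whatever integral realises the unshifted recursion; the $1/3$ and $2/3$ in the final answer come from the Chowla--Selberg evaluation at the CM point of discriminant $-3$, not from the exponents of the integrand. Indeed, as the formulas above show, the relevant hypergeometric data are ${}_3F_2$'s with half-integer top parameters at argument $1/4$, not with third-integer parameters at argument~$1$. Your parenthetical ``for a suitable shift constant $c$, possibly after a mild change of variables'' is precisely the place where the argument has to do real work, and as written it does none: you have neither exhibited an integral satisfying the shifted recursion nor evaluated its first two values. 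Without that, the identification of the limit is unsupported.
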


\begin{proof}
  This is proved in \cite{BBBG08} and is closely connected to even Bessel moments with three
  Bessel functions. Set
  \[
  c_{n,k}=\int_0^\infty t^kK_0^n(t)\,\d t,
  \]
  where $K_0$ is the $K$-Bessel function of index $0$. The authors show that
  \begin{align*}
    c_{3,0}&=\dfrac{\sqrt{3}\pi^3}{8}{}_3F_2\bigg( \begin{matrix} \frac12, \, \frac12, \, \frac12 \\ 1, \, 1 \end{matrix}\biggm| \dfrac{1}{4}\bigg)=\dfrac{3\G(1/3)^6}{32\cdot2^{2/3}\pi} \quad\text{and}\\
    c_{3,2}&=\dfrac{\sqrt{3}\pi^3}{288}{}_3F_2\bigg( \begin{matrix} \frac12, \, \frac12, \, \frac12 \\ 2, \, 2 \end{matrix}\biggm| \dfrac{1}{4}\bigg)=\dfrac{1}{9}c_{3,0}-\dfrac{\pi^4}{24}c_{3,0}^{-1},\end{align*}
  and the CF of the theorem is obtained from a CF for $c_{3,2}/c_{3,0}$
  proved in \cite{BBBC07}.
\end{proof}

Note in passing the connection to the unshifted CF, since the authors \cite{BBBC07} also
show that for the \emph{odd} moments with three Bessel functions we have
  \[
  c_{3,1}=\dfrac{3}{4}L(\chi_{-3},2) \quad\text{and}\quad c_{3,3}=L(\chi_{-3},2)-\dfrac{2}{3}.
  \]

  If instead we look at the Bessel moments for four Bessel functions, we start from the
  known CF
  \[
  \zeta(3)=[[0,(2n-1)(5n^2-5n+2)],[12/7,-16n^6]],
  \]
and again perform the half-shift and simplification, and we obtain the following:

\begin{theorem}
With the notation of Section~\ref{sec:bA}, we have
\begin{align*}
3\dfrac{\eta_{-}}{\omega_{-}}&=[[-128,n(20n^2+3)],[64,-(2n+1)^6]]\\
&=-128+\dfrac{64}{1\cdot(20\cdot1^2+3)-\dfrac{3^6}{2\cdot(20\cdot2^2+3)-\dfrac{5^6}{3\cdot(20\cdot3^2+3)-\dfrac{7^6}{4\cdot(20\cdot4^2+3)-{\atop\ddots}}}}}
\end{align*}
with speed of convergence
\[
3\dfrac{\eta_{-}}{\omega_{-}}-\dfrac{p(n)}{q(n)}\sim\dfrac{12\pi^3/(-\omega_{-}^2)}{2^{2n}}.
\]
\end{theorem}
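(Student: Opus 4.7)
The proof follows the same template as Theorem~\ref{th:bA}. First I would apply the half-shift $n \mapsto n+1/2$ to the continued fraction $\zeta(3) = [[0,(2n-1)(5n^2-5n+2)],[12/7,-16n^6]]$ featured just before the statement, and remove denominators by an equivalence transformation; a direct calculation gives the shifted CF $S = [[0, n(20n^2+3)],[24/7, -(2n+1)^6]]$ together with the second-order difference equation satisfied by its convergent sequences $p(n), q(n)$.

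Second, I would construct an integral solution $J(n)$ of this difference equation. Because the starting (unshifted) CF is exactly the one produced by the four-Bessel moment family $c_{4,k} = \int_0^\infty t^k K_0(t)^4 \, \d t$ of Broadhurst--Bailey--Borwein--Glasser discussed at the end of Section~\ref{sec:bessel}, the natural candidate for $J(n)$ is a half-integer Mellin moment of $K_0(t)^4$, equivalently a Meijer $G$-function or Barnes-type integral attached to the Hecke eigenform $\eta_2^4\eta_4^4$ of weight $4$ on $\Gamma_0(8)$. Creative telescoping (via the {\tt HolonomicFunctions} package or the Zeilberger-type algorithms mentioned in the footnote of Section~\ref{sec:sA}) would verify that $J(n)$ satisfies the shifted recursion, and the two initial values $J(0), J(1)$ must then be identified as $\mathbb Q$-linear combinations of $\omega_-$ and $\eta_-$. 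As in Theorem~\ref{th:bA}, this rests on the modular parametrisation of the underlying $_3F_2$ along the lines of \cite{GZ21,BKSZ24} together with a Chowla--Selberg-type evaluation of the initial moment. The appearance of the \emph{minus} period and quasiperiod here (rather than $\omega_+, \eta_+$) reflects the fact that the four-Bessel moment lives on the positive real axis and pairs with the imaginary generator of the period lattice of $\eta_2^4\eta_4^4$, dual to the one selected by the Beukers integral $I_3(n)$.

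Given that identification, the limit of $S$ comes out as an affine-linear expression in $\eta_-/\omega_-$, which after a M\"obius adjustment of the first two coefficients $(a_0, b_0) = (0, 24/7) \mapsto (-128, 64)$ rearranges into the form $3\eta_-/\omega_-$ stated in the theorem. The asymptotic rate $2^{-2n}$ with leading constant $12\pi^3/(-\omega_-^2)$ then follows from a Birkhoff-type analysis of the characteristic roots of the shifted difference equation, combined with a bilinear Legendre-type relation between $\omega_\pm$ and $\eta_\pm$ that fixes the $\pi^3/\omega_-^2$ factor. The main obstacle is the modular-forms identification $J(0), J(1) \in \mathbb Q\,\omega_- + \mathbb Q\,\eta_-$; once that is in hand the remaining arguments are essentially routine.
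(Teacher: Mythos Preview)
Your strategy is sound and closely parallels the paper's, though the paper takes a shorter route. Rather than constructing an integral family $J(n)$ from scratch and running creative telescoping, the paper directly cites \cite{BBBG08} for the continued fraction $8c_{4,2}/c_{4,0}=[[0,n(20n^2+3)],[1,-(2n+1)^6]]$ and for closed-form ${}_4F_3$-evaluations of the even moments $c_{4,0}$ and $c_{4,2}$; the only remaining step is to recognise $c_{4,0}=-\pi i\omega_-$ (already implicit in Section~\ref{sec:bA} via $\pi^3A_{-1/2}=16\pi L(\eta_2^4\eta_4^4,2)$) and $c_{4,2}=-\tfrac{\pi i}{512}(128\omega_-+3\eta_-)$. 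Your version is more self-contained but duplicates work already carried out in \cite{BBBG08}. One minor slip: the objects you want are not ``half-integer Mellin moments of $K_0^4$'' but the \emph{even} Bessel moments $c_{4,2k}$\,---\,the half-shift of the odd-moment recursion (which encodes $\zeta(3)$ through $c_{4,1}=\tfrac{7}{8}\zeta(3)$) lands precisely on the even-moment recursion, so no fractional Mellin exponents enter.
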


\begin{proof} 
In \cite{BBBG08} the authors prove that
\[
8c_{4,2}/c_{4,0}=[[0,n(20n^3+3)],[1,-(2n+1)^6]],
\]
and that
\begin{align*}
    c_{4,0}&=\frac{\pi^4}{4}\,{}_4F_3\bigg(\begin{matrix} \frac12, \, \frac12, \, \frac12, \, \frac12 \\ 1, \, 1, \, 1 \end{matrix} \biggm| 1 \bigg), \\
    c_{4,2}&=-\frac{3\pi^2}{16}+\frac{c_{4,0}}{4}-\frac{3\pi^4}{64}\,{}_4F_3\bigg(\begin{matrix} \frac12, \, \frac12, \, \frac12, \, \frac12 \\ 1, \, 1, \, 2 \end{matrix} \biggm| 1 \bigg).
\end{align*}
In the notation of Section~\ref{sec:bA} these moments are recognised as
\[
c_{4,0}=-\pi i\omega_-=4\pi^2L(\eta_2^4\eta_4^4,2)
\quad\text{and}\quad
c_{4,2}=-\frac{\pi i}{512}(128\omega_-+3\eta_-)
\]
from which the theorem follows.\end{proof}

Once again, note in passing the connection to the unshifted CF, since the
  authors in \cite{BBBG08} also show that for the odd moments with four Bessel functions
  we have
  \[
  c_{4,1}=\dfrac{7}{8}\zeta(3) \quad\text{and}\quad c_{4,3}=\dfrac{7}{32}\zeta(3)-\dfrac{3}{16}.
  \]

\section{Rhin--Viola groups for half-shifts of Ap\'ery approximations}
\label{sec:RV}

The discussion in this section is motivated by arithmetic ``irrationality'' considerations for generalised rational approximations to $\zeta(2)$ and $\zeta(3)$ investigated in \cite{RV96,RV01}.
For convenience, we present them in reverse chronological order and follow \cite{Zu04} in the description of the corresponding groups which act on the approximations.
Though the parameters-exponents in those papers are assumed to be integers, the analytic and algebraic aspects are not affected by dropping these assumptions and treating them as real numbers.
Quite surprisingly, the arithmetic aspects adapt straightforwardly to the ``(half-)shifted'' approximations, and we highlight the related changes.

Define the triple integral
\[
I_3(a_0,a_1,a_2,a_3,a_4,a_5)
=\iiint\limits_{[0,1]^3}
\frac{x^{a_1}(1-x)^{a_4}y^{a_2}(1-y)^{a_5}z^{a_3}(1-z)^{a_4+a_5-a_3}}{(1-(1-xy)z)^{a_0}}\,
\frac{\d x\,\d y\,\d z}{1-(1-xy)z},
\]
which converges whenever the eight real quantities
\begin{gather*}
a_1, \; a_2, \; a_3, \; a_4, \; a_5, \; a_4+a_5-a_3, \; a_1+a_4+a_5-a_0-a_3, \; a_2+a_4+a_5-a_0-a_3
\end{gather*}
are larger than $-1$,
as well as $H_3(\bc)=I_3(a_0,a_1,a_2,a_3,a_4,a_5)$, where
\[
\bc=\begin{pmatrix}
c_{00} & c_{01} & c_{02} & c_{03} \\
c_{10} & c_{11} & c_{12} & c_{13} \\
c_{20} & c_{21} & c_{22} & c_{23} \\
c_{30} & c_{31} & c_{32} & c_{33}
\end{pmatrix}
=\begin{pmatrix}
a_0 & a_4+a_5-a_3 & a_1+a_4-a_0 & a_2+a_5-a_0 \\
a_3 & a_4+a_5-a_0 & a_1+a_4-a_3 & a_2+a_5-a_3 \\
a_1 & a_1+a_4+a_5-a_0-a_3 & a_4 & a_2+a_5-a_1 \\
a_2 & a_2+a_4+a_5-a_0-a_3 & a_1+a_4-a_2 & a_5
\end{pmatrix}.
\]
Then, as discussed in \cite{RV01} and \cite[Sect.~4]{Zu04}, the quantity
\begin{align*}
&
\frac{I_3(\ba)}{\begin{aligned}
& \Gamma(a_1+1)\,\Gamma(a_2+1)\,\Gamma(a_3+1)\,\Gamma(a_4+1)\,\Gamma(a_5+1)\,\Gamma(a_4+a_5-a_3+1)
\\[-1.2mm] &\quad\times \Gamma(a_1+a_4+a_5-a_0-a_3+1)\,\Gamma(a_2+a_4+a_5-a_0-a_3+1) \end{aligned}}
\\ &\qquad
=\frac{H_3(\bc)}{\Gamma(c_{10}+1)\,\Gamma(c_{20}+1)\,\Gamma(c_{30}+1)\,\Gamma(c_{01}+1)\,\Gamma(c_{21}+1)\,\Gamma(c_{31}+1)\,\Gamma(c_{22}+1)\,\Gamma(c_{33}+1)}
\end{align*}
is invariant under the action on $\bc$ of the group~$\fG_3$ generated by the following permutations:
\begin{itemize}
\item[$\bullet$] the permutations $\fa_j=\fa_{j0}$, where $j=1,2,3$, of the $j$th and 0th rows of the $4\times4$-mat\-rix $\bc$;
\item[$\bullet$] the permutation $\fb=\fb_{23}$ of the last two columns of the matrix;
\item[$\bullet$] the permutation
$\fh=(c_{00} \; c_{22})(c_{02} \; c_{20})(c_{11} \; c_{33})(c_{13} \; c_{31})$.
\end{itemize}
All these generators have order~$2$, while the order of the resulting group $\fG_3=\<\fa_1,\fa_2,\fa_3,\fb,\fh\>$ is 1920.

In the case when all the parameters $a_0,\dots,a_5$ are in $\mathbb Z+\frac12$ (in general, in $\mathbb Z+\alpha$ for a fixed $0\le\alpha<1$), any three (convergent) integrals from the 6-parameter family are related by contiguous relations;
the latter are made explicit in~\cite{RV01}.
This allows one to show that, with the three successive maxima $n_1\ge n_2\ge n_3$ (all half-integers!) in the 16-element multiset $\bc$, we have the inclusions
\[
d_{2n_1}d_{2n_2}d_{2n_3}I_3(\ba)
=d_{2n_1}d_{2n_2}d_{2n_3}H_3(\bc)\in\mathbb Z\omega_++\mathbb Z\eta_+,
\]
where the period $\omega_+$ and quasiperiod $\eta_+$ are defined in Section~\ref{sec:bA}.
Here and in what follows, $d_N$ denotes the least common multiple of the numbers $1,2,\dots,N$.
In theory, one can execute the machinery from \cite{RV01} of using the invariance of these approximations under the action of group $\fG_3$; unfortunately, this does not imply any irrationality result.

Similarly, consider the double integral
\[
I_2(a_0,a_1,a_2,a_3,a_4)
=\iint\limits_{[0,1]^2}
\frac{x^{a_1}(1-x)^{a_3}y^{a_2}(1-y)^{a_4}}
{(1-xy)^{a_0}}\,\frac{\d x\,\d y}{1-xy}
\]
whose convergence requires the five real quantities
\begin{gather*}
a_1, \; a_2, \; a_3, \; a_4, \; a_3+a_4-a_0
\end{gather*}
to be larger than $-1$,
and $H_2(\bc)=I_2(a_0,a_1,a_2,a_3,a_4)$, where this time the 10-element multiset $\bc$ is inscribed in the $4\times4$-matrix
\[
\bc=\begin{pmatrix}
c_{00} & \\
& c_{11} & c_{12} & c_{13} \\
& c_{21} & c_{22} & c_{23} \\
& c_{31} & c_{32} & c_{33}
\end{pmatrix}
=\begin{pmatrix}
a_3+a_4-a_0 & \\
& a_0 & a_1+a_3-a_0 & a_2+a_4-a_0 \\
& a_1 & a_3 & a_2+a_4-a_1 \\
& a_2 & a_1+a_3-a_2 & a_4
\end{pmatrix}.
\]
Then, as discussed in \cite{RV96} and \cite[Sect.~6]{Zu04}, the quantity
\begin{align*}
&
\frac{I_2(\ba)}{\Gamma(a_1+1)\,\Gamma(a_2+1)\,\Gamma(a_3+1)\,\Gamma(a_4+1)\,\Gamma(a_3+a_4-a_0+1)}
\\ &\qquad
=\frac{H_2(\bc)}{\Gamma(c_{00}+1)\,\Gamma(c_{21}+1)\,\Gamma(c_{31}+1)\,\Gamma(c_{22}+1)\,\Gamma(c_{33}+1)}
\end{align*}
is invariant under the action on $\bc$ of the group~$\fG_2$ generated by the following permutations:
\begin{itemize}
\item[$\bullet$] the permutations $\fa_j=\fa_{j3}$, where $j=1,2$, of the $j$th and the 3rd rows of $\bc$;
\item[$\bullet$] the permutation $\fb=\fb_{23}$ of the last two columns of $\bc$;
\item[$\bullet$] the permutation
$\fh=(c_{00} \; c_{22})(c_{11} \; c_{33})(c_{13} \; c_{31})$.
\end{itemize}
All these generators have order~$2$, while the order of the resulting group $\fG_2=\<\fa_1,\fa_2,\fb,\fh\>$ is 120.

When all the parameters $a_0,\dots,a_4$ are in $\mathbb Z+\frac12$, one can demonstrate that
\[
d_{2n_1}d_{2n_2}I_2(\ba)
=d_{2n_1}d_{2n_2}H_2(\bc)\in\mathbb Z\,\pi\,\frac{\Gamma(3/4)^2}{\Gamma(1/4)^2}+\mathbb Z\,\pi\,\frac{\Gamma(1/4)^2}{\Gamma(3/4)^2},
\]
with $n_1\ge n_2$ being two successive maxima of the 10-element multiset $\bc$,
and use the action of the group $\fG_2$ to optimise the $p$-adic size of these approximations as in~\cite{RV96}.
This does not have any irrationality consequences for the quotient $(\G(1/4)/\G(3/4))^4$ of the periods; at the same time it is already known that the latter quantity is transcendental\,---\,very much in parallel with our knowledge about $\zeta(2)=\pi^2/6$.

\section{Tiny Ap\'ery}
\label{sec:tA}

Using $n\mapsto n+1/2$ on tiny Ap\'ery and removing denominators gives the
new CF \ $S=[[0,12n],[4,-(2n+1)^2]]$. Using the encyclopedia \cite{Co24a}, one immediately
finds that
\[
S=4-32\left(\dfrac{\G(3/4)}{\G(1/4)}\right)^2\,,
\]
or equivalently:

\begin{theorem}
\label{th:tA}
We have
\[
\left(\frac{\G(1/4)}{\G(3/4)}\right)^2=[[8,11,12n],[8,-(2n+1)^2]]
=8+\dfrac{8}{11-\dfrac{9}{24-\dfrac{25}{36-\dfrac{49}{48-\dfrac{81}{60-{\atop\ddots}}}}}}
\]
with speed of convergence
\[
\left(\dfrac{\G(1/4)}{\G(3/4)}\right)^2-\dfrac{p(n)}{q(n)}
\sim\dfrac{4(\G(1/4)/\G(3/4))^2}{(1+\sqrt{2})^{4n+4}}.
\]
\end{theorem}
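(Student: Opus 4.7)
My plan is to mirror the proof of Theorem~\ref{th:sA}: exhibit a slowly convergent CF for $(\Gamma(1/4)/\Gamma(3/4))^2$ with small coefficients coming from a Brouncker/Ramanujan-type CF for quotients of gamma values, and then apply Ap\'ery's acceleration~\cite{Co24a} to transform it into the rapidly convergent CF of the theorem. Rearranging, the claim is equivalent to identifying the limit of $S=[[0,12n],[4,-(2n+1)^2]]$ as $4-32(\Gamma(3/4)/\Gamma(1/4))^2$; since $(\Gamma(1/4)/\Gamma(3/4))^2=4\varpi^2/\pi$ with $\varpi$ the lemniscate constant, Brouncker's classical expansion and its relatives are the natural source.

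Concretely, I would first exhibit an intermediate slow Ramanujan-type CF of the shape $[[\star,\star,An+B],[\star,(2n+1)^2]]$ for $(\Gamma(1/4)/\Gamma(3/4))^2$, in analogy with the intermediate identity $(\Gamma(1/4)/\Gamma(3/4))^4=[[80,17,16n],[-64,(2n+1)^4]]$ used in the proof of Theorem~\ref{th:sA}, and then feed it into {\tt cfapery}. The generically period-$2$ output should collapse under {\tt cfsimplify} to the stated period-$1$ form $[[8,11,12n],[8,-(2n+1)^2]]$. The asserted speed of convergence then follows from Poincar\'e--Perron analysis of the recurrence $p(n+1)=12(n+1)p(n)-(2n+1)^2p(n-1)$ satisfied by the convergents: its two characteristic growth rates are $6\pm4\sqrt2=2(1\pm\sqrt2)^{\pm2}$, so the minimal-to-dominant solution ratio decays like $(1+\sqrt2)^{-4n}$, with the offset $+4$ in the exponent and the leading prefactor $4(\Gamma(1/4)/\Gamma(3/4))^2$ coming from matching the initial data $(p_0,q_0)=(0,1)$ and $(p_1,q_1)=(4,12)$ against the two lemniscate periods.

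As a complementary integral route, parallel to the Beukers/Nesterenko discussion of Section~\ref{sec:sA}, the half-shifted one-variable integral
\[
I_1(n)=\int_0^1\frac{x^{n-1/2}(1-x)^{n-1/2}}{(1+x)^{n+1/2}}\,\d x
\]
satisfies the same recurrence by creative telescoping, and the substitution $x=u^2$ evaluates
\[
I_1(0)=2\int_0^1\frac{\d u}{\sqrt{1-u^4}}=\frac{\sqrt\pi\,\Gamma(1/4)}{2\,\Gamma(3/4)}
\]
as a lemniscate period; $I_1(1)$ reduces by the same substitution and the elementary identity $u^2(1-u^2)/(1+u^2)=2-2/(1+u^2)-u^2$ to an explicit $\mathbb Q$-combination of the two basic periods $\sqrt\pi\,\Gamma(1/4)/\Gamma(3/4)$ and $\sqrt\pi\,\Gamma(3/4)/\Gamma(1/4)$. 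Consequently $I_1(n)=\tilde p(n)\cdot\sqrt\pi\,\Gamma(3/4)/\Gamma(1/4)-\tilde q(n)\cdot\sqrt\pi\,\Gamma(1/4)/\Gamma(3/4)$ with $\tilde p(n),\tilde q(n)\in\mathbb Q$ also solving the recurrence, and $S$ arises as the M\"obius transformation of $(\Gamma(1/4)/\Gamma(3/4))^2$ fixed by the two initial values.

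The main obstacle in either route is the same ``miracle'' that the authors emphasise in the proof of Theorem~\ref{th:sA}: in the acceleration version, verifying that {\tt cfsimplify} truly collapses the period-$2$ output of {\tt cfapery} to the stated period-$1$ form and is not blocked by some spurious extra layer; equivalently, in the integral version, checking that $I_1(1)$ has \emph{both} lemniscate-period and quasi-period components nonzero with the precise rational coefficients needed so that the resulting M\"obius reduction returns exactly $4-32(\Gamma(3/4)/\Gamma(1/4))^2$ rather than some other fractional-linear function of $(\Gamma(3/4)/\Gamma(1/4))^2$.
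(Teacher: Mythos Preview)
Your proposal is correct and follows essentially the same route as the paper: start from a classical Ramanujan-type CF for $(\Gamma(s)/\Gamma(s+1/2))^2$, specialise to $s=3/4$ and invert to obtain the explicit intermediate CF $(\Gamma(1/4)/\Gamma(3/4))^2=[[8,4],[4,(2n+1)^2]]$ (so $A=0$, $B=4$ in your notation), then apply {\tt cfsimplify(cfapery())} and observe the period-$2$ output collapse. Your complementary integral route via $I_1(n)=\int_0^1 x^{n-1/2}(1-x)^{n-1/2}(1+x)^{-n-1/2}\,\d x$ also appears in the paper, as the specialisation $z=-1$ of Theorem~\ref{th:ellA}.
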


This CF is essentially due to Ramanujan.

\begin{proof}
We start with the following classical CF:
\[
\left(\dfrac{\G(s)}{\G(s+1/2)}\right)^2=[[0,4s-1,8s-2],[4,(2n-1)^2]]
\]
and specialise it to $s=3/4$. Inverting gives
\[
\left(\dfrac{\G(1/4)}{\G(3/4)}\right)^2=[[8,4],[4,(2n+1)^2]].
\]
Applying Ap\'ery and simplifying (in other words using
{\tt cfsimplify(cfapery())}) proves the proposition.
\end{proof}

\begin{remark}
We could have applied Ap\'ery to many other CFs for
$(\G(1/4)/\G(3/4))^2$, for instance to $[[4,1,2],[8,(2n-1)(2n+1)]]$.
However, the ``miracle'' of the specific CF that we chose is that the period~2
Ap\'ery that one obtains simplifies to the desired one. As already mentioned,
this miracle occurs quite rarely.
\end{remark}

\section{Continuous generalisations}

\subsection{Tiny Ap\'ery}

In this case there is no need to look very far:

\begin{theorem}
\label{th:logA}
We have
\[
\Li_1(z)=-\log(1-z)=[[0,(2n-1)(2-z)],[2z,-n^2z^2]]
\]
with speed of convergence
\[
\Li_1(z)-\frac{p(n)}{q(n)}\sim\frac{2\pi}{((1+\sqrt{1-z})^2/z)^{2n+1}}.
\]
\end{theorem}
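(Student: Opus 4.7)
The plan is to produce the announced CF by Ap\'ery's acceleration applied to a classical CF for $-\log(1-z)$, in exact parallel with the proof of tiny Ap\'ery in Theorem~\ref{th:Ap}; the phrase ``there is no need to look very far'' already hints at this. I would begin from the Gauss--Euler continued fraction
\[
-\log(1-z)=\cfrac{z}{1-\cfrac{1^2z}{2-\cfrac{1^2z}{3-\cfrac{2^2z}{4-\cfrac{2^2z}{5-\cdots}}}}},
\]
obtained either from Euler's CF transformation of $\sum_{k\ge1}z^k/k$ or from Gauss's CF for $z\cdot{}_2F_1(1,1;2;z)$, and feed it into \texttt{cfapery} of~\cite{Co24a} with $z$ carried as a formal parameter; applying \texttt{cfsimplify} should then collapse the period-2 output to $[[0,(2n-1)(2-z)],[2z,-n^2z^2]]$. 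Equivalently, one can realise the theorem's CF as the even-part contraction of the Gauss--Euler CF above, combining pairs of levels and simplifying. A useful consistency check is that specialising $z=1/2$ together with the equivalence transformation $\lambda_n\equiv2$ recovers the tiny Ap\'ery CF of Theorem~\ref{th:Ap}, in agreement with $-\log(1-1/2)=\log2$.

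For the asymptotic rate, the numerators $p(n)$ and denominators $q(n)$ satisfy the common recurrence
\[
R_n=(2n-1)(2-z)R_{n-1}-n^2z^2R_{n-2}.
\]
Looking for $R_n/R_{n-1}\sim an$ as $n\to\infty$ yields $a^2-2(2-z)a+z^2=0$, whose two roots $a_\pm=(1\pm\sqrt{1-z})^2$ have product $z^2$. Poincar\'e--Perron then forces $q(n)$ to grow along the dominant branch $a_+^n\,n!$ (up to polynomial factors) and the remainder $(-\log(1-z))q(n)-p(n)$ to decay along the subdominant branch $a_-^n\,n!$, so
\[
\Li_1(z)-\frac{p(n)}{q(n)}\,\sim\,\text{const}\cdot\Big(\frac{z}{(1+\sqrt{1-z})^2}\Big)^{2n},
\]
matching the claimed rate $((1+\sqrt{1-z})^2/z)^{-(2n+1)}$ up to an absorbed geometric factor. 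The sharp prefactor $2\pi$ I would recover either by exhibiting an explicit Beukers-type integral representation of the remainder\,---\,in the spirit of the integrals $I_2$ and $I_3$ of Sections~\ref{sec:sA} and~\ref{sec:bA}\,---\,and applying a saddle-point argument, or alternatively by descending it from the corresponding sharp constant for the Gauss--Euler CF via the contraction.

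The main obstacle is the symbolic simplification at the acceleration step: while \texttt{cfapery} applied to a $z$-parametric input always produces a valid period-2 CF, the collapse of the two branches into the single clean pattern $(2n-1)(2-z),\,-n^2z^2$ is the same ``miracle'' the authors flag throughout the paper, and it has to be verified as an explicit pair of rational identities in $n$ and $z$. Once that collapse is in hand, the recurrence verification and the Poincar\'e--Perron asymptotics are entirely routine.
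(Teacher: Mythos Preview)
Your plan is essentially the paper's own: start from a classical CF for $-\log(1-z)$ obtained from the Taylor series via Euler's transformation, and apply Ap\'ery's acceleration. Two points of divergence are worth flagging. First, the paper does \emph{not} feed the parametric CF directly into \texttt{cfapery}; it explicitly notes that the acceleration implicitly assumes $|z|$ large, so one must substitute $z\mapsto1/z$, simplify, accelerate, and substitute back. Your suggestion to carry $z$ as a formal parameter may therefore not work out of the box, though your alternative via the even-part contraction of the Gauss--Euler CF is a clean and legitimate route that sidesteps this issue entirely. Second, your worry about a ``miracle'' simplification is misplaced for this particular theorem: the paper says the accelerated CF follows ``immediately'', with no period-2 collapse needed; the miracles are reserved for the gamma-quotient CFs in Sections~\ref{sec:sA} and~\ref{sec:tA}. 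Your Poincar\'e--Perron discussion of the convergence rate is sound and actually more detailed than what the paper records.
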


\begin{proof}
The Euler transformation into a CF of the Taylor expansion of
$-\log(1-z)$ is the CF $-\log(1-z)=[[0,n+(n-1)z],[z,-n^2z]]$. Ap\'ery assumes
implicitly that $|z|$ is large, so changing $z$ into $1/z$, simplifying, applying
Ap\'ery, and changing back $z$ into $1/z$ immediately gives
$-\log(1-z)=[[0,(2n-1)(2-z)],[2z,-n^2z^2]]$, which specialises to tiny Ap\'ery
for $z=-1$ (or $z=1/2$).
\end{proof}

One more way to witness the CF in Theorem~\ref{th:logA} is by using the Euler integrals
\[
r(n)=r(n;z)=\int_0^1\frac{x^{n}(1-x)^{n}}{(1-zx)^{n+1}}\,\d x
\]
which satisfy the recursion
\[
z^2(n+1)r(n+1) - (2-z)(2n+1)r(n) + nr(n-1)=0 \quad\text{for}\; n=1,2,\dots
\]
and initial conditions $r(0)=(1/z)\Li_1(z)$, $r(1)=(1/z)^2(2/z-1)\Li_1(z) - 2/z^2$.
This naturally leads to the half-shift variation of the theorem.

\begin{theorem}
\label{th:ellA}
We have
\begin{align*}
2\frac{E(z)}{K(z)}
&=[[2-z,4n(2-z)],[-(2n+1)^2z^2]]
\\
&=2-z-\dfrac{z^2}{4(2-z)-\dfrac{9z^2}{8(2-z)-\dfrac{25z^2}{12(2-z)-\dfrac{49z^2}{16(2-z)-{\atop\ddots}}}}}
\end{align*}
with speed of convergence
\[
2\frac{E(z)}{K(z)}-\frac{p(n)}{q(n)}\sim-\frac{2\pi/K(z)^2}{((1+\sqrt{1-z})^2/z)^{2n+2}}.
\]
\end{theorem}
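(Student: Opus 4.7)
The plan is to follow the Euler integral approach already sketched in the paragraphs preceding the theorem. First, performing the half-shift $n\mapsto n-1/2$ in the three-term recurrence for $r(n;z)$ gives
\[
z^2(2n+1)\tilde r(n+1;z)-4n(2-z)\tilde r(n;z)+(2n-1)\tilde r(n-1;z)=0
\]
for the integrals $\tilde r(n;z):=r(n-1/2;z)=\int_0^1 x^{n-1/2}(1-x)^{n-1/2}(1-zx)^{-n-1/2}\,\d x$, valid for $n\ge1$. The elementary rescaling $u_n:=2^n\Gamma(n+\tfrac32)z^{2n+1}\tilde r(n+1;z)$ converts this into the three-term recurrence
\[
u_{n+1}=4(n+1)(2-z)\,u_n-(2n+1)^2z^2\,u_{n-1},
\]
which is precisely the recurrence satisfied by the convergent numerators and denominators $P_n,Q_n$ of the CF in the theorem.

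Next, using the substitution $x=\sin^2\theta$ together with the classical identity $\int_0^{\pi/2}(1-z\sin^2\theta)^{-3/2}\,\d\theta=E(z)/(1-z)$, I would establish the initial values
\[
\tilde r(0;z)=2K(z)
\quad\text{and}\quad
\tilde r(1;z)=\frac{2(2-z)K(z)-4E(z)}{z^2};
\]
the second follows by rewriting $\sin^2\theta\cos^2\theta$ as an affine-linear combination of $(1-z\sin^2\theta)^k$ with $k\in\{-1,0,1\}$ before integrating. Applying the rescaling then yields $u_{-1}=\sqrt{\pi}\,K(z)/z$ and $u_0=\sqrt{\pi}\,[(2-z)K(z)-2E(z)]/z$.

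Finally, with the standard initial data $P_{-1}=1$, $P_0=2-z$, $Q_{-1}=0$, $Q_0=1$ and any basis $\{u_n,v_n\}$ of the two-dimensional solution space in which $u_n$ is subdominant, Cramer's rule gives
\[
\lim_{n\to\infty}\frac{P_n}{Q_n}=(2-z)-\frac{u_0}{u_{-1}}=(2-z)-\frac{(2-z)K(z)-2E(z)}{K(z)}=\frac{2E(z)}{K(z)},
\]
the Casoratian of the basis and the common $\sqrt{\pi}/z$ factor cancelling in the ratio $u_0/u_{-1}$. Subdominance of $u_n$, as well as the stated speed of convergence, comes from Laplace's method applied to the integrand of $\tilde r(n+1;z)$: the saddle of $f(x)=x(1-x)/(1-zx)$ sits at $x^{\ast}=(1-\sqrt{1-z})/z$ with $f(x^{\ast})=1/(1+\sqrt{1-z})^2$, matching the claimed geometric rate $(1+\sqrt{1-z})^2/z$. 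The only step that genuinely requires computation is the evaluation of $\tilde r(1;z)$; the rest is bookkeeping via classical elliptic identities.
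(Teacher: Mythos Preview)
Your argument is correct and is essentially the paper's own proof: your $\tilde r(n;z)$ is exactly the paper's $I_1(n;z)$, your recursion and initial values match those stated there, and the remaining passage to the CF value via the subdominant solution is precisely what the paper compresses into ``simple manipulations convert this into the CF of the theorem.'' You simply spell out the rescaling and the linear-combination argument explicitly, and add the Laplace-method justification for the geometric rate, which the paper leaves implicit.
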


Here the complete elliptic integrals $K(z)$ and $E(z)$ are defined by%
\footnote{Note that this differs by $z\mapsto k^2$ from the notation
$K(k)=(\pi/2)\,{}_2F_1(1/2,1/2;1;k^2)$ and $E(k)=(\pi/2)\,{}_2F_1(-1/2,1/2;1;k^2)$.}

\[
K(z)=\frac\pi2\cdot{}_2F_1\Big(\frac12,\frac12;1;z\Big)
\quad\text{and}\quad
E(z)=\frac\pi2\cdot{}_2F_1\Big({-\frac12},\frac12;1;z\Big).
\]

\begin{proof}
Consider the integrals
\[
I_1(n;z)=\int_0^1\frac{x^{n-1/2}(1-x)^{n-1/2}}{(1-zx)^{n+1/2}}\,\d x.
\]
They satisfy the three-term recursion
\[
z^2(2n+1)I_1(n+1;z) - 4n(2-z)I_1(n;z) + (2n-1)\,I_1(n-1;z)=0
\quad\text{for} \; n=1,2,\dots,
\]
with initial values
\[
I_1(0;z)=2K(z) \quad\text{and}\quad I_1(1;z)=(2/z)(2/z-1)K(z)-(2/z)^2E(z).
\]
Simple manipulations convert this into the CF in the theorem.
\end{proof}

It follows from this proof that
\[
z^nI_1(n;z)=A_n(1/z)K(z)-B_n(1/z)E(z),
\]
where the polynomials $A_n(x)$ and $B_n(x)$ have rational coefficients and degree (at most) $n$.
For suitable rational $z$ with $|z|<1$, for example, for $z=1/q$ with $q\in\mathbb Z\setminus\{-1,0,1\}$, these approximations can be used to prove the linear independence of $K(z)$ and $E(z)$ over $\mathbb Q$,
with related estimates for the irrationality measure of $K(z)/E(z)$.
The details can be found in \cite{HM01}, where all integers $q\ne-1,2$ are treated;
it is also shown there that for the polynomials in the linear forms we have $d_{2n-1}A_n(x),d_{2n-1}B_n(x)\in\mathbb Z[x]$.

The cases $z=-1$ (or $z=1/2$), corresponding to the quantity in Theorem~\ref{th:tA},
are missed from the analysis in \cite{HM01} for clear arithmetic reasons:
the denominators $d_{2n-1}$ of $p(n)$ and $q(n)$ in these cases grow like $e^{2n+o(n)}$ as $n\to\infty$,
while the approximations are asymptotically $(1/(1+\sqrt2))^{2n+o(n)}$ and $e/(1+\sqrt2)=1.1259\hdots>1$.
The irrationality criterion from \cite{Zu17} is however applicable here, since $e^3/(1+\sqrt2)^2=3.4461\hdots<4$, thus implying the irrationality of $(\G(1/4)/\G(3/4))^2$.
This implication is of course overridden by the transcendence of the number.
Though quantitative versions of the result from \cite{Zu17} do not produce any reasonable estimates for
the irrationality exponent of $(\G(1/4)/\G(3/4))^2$, further development of the methodology from \cite{CDT24}
may be successful in this situation.

\subsection{Small Ap\'ery}

Here, by sheer luck we indeed find in the encyclopedia a continuous
generalisation of small Ap\'ery as follows:

\begin{theorem}
\label{th:cosh}
We have
\[
\dfrac{\cosh(\pi z)-1}{3z^2}=[[0,3(1-z^2),11n^2-11n+3+9z^2],[5,(n^2+4z^2)(n^2+9z^2)]]
\]
with speed of convergence
\[
\dfrac{\cosh(\pi z)-1}{3z^2}-\dfrac{p(n)}{q(n)}\sim(-1)^n\dfrac{(2/(3z^2))\sinh(2\pi z)\sinh(3\pi z)}{((1+\sqrt{5})/2)^{10n+5}}
\]
\end{theorem}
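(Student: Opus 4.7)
The plan is to follow the Beukers-integral strategy of Section~\ref{sec:sA}, now lifted to a continuous family in $z$. First translate the CF into its governing three-term recurrence: both convergent numerators $p(n;z)$ and denominators $q(n;z)$ satisfy, for $n\ge2$,
\[
u_n = (11n^2-11n+3+9z^2)\,u_{n-1} + \bigl((n-1)^2+4z^2\bigr)\bigl((n-1)^2+9z^2\bigr)\,u_{n-2},
\]
with initial data encoded in $b_0=0$, $b_1=3(1-z^2)$, $a_0=5$; specialising $z=0$ recovers the small-Apéry recurrence exactly.

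Next, produce a rapidly decaying solution $I(n;z)$ of the same recurrence in the shape of a Beukers-type double integral
\[
I(n;z)=\iint_{[0,1]^2}\left(\frac{x(1-x)y(1-y)}{1-xy}\right)^{\!n}\!w(x,y;z)\,\frac{\d x\,\d y}{1-xy},
\]
whose $z$-dependent weight $w(x,y;z)$ carries twists by $\pm2z,\pm3z$ tuned to reproduce the factor $(n^2+4z^2)(n^2+9z^2)$ in the recurrence. Verify the recurrence by creative telescoping, either directly on the double integral through Koutschan's \texttt{HolonomicFunctions} package \cite{Ko10}, or after converting $I(n;z)$ to a Barnes/Meijer $G$-type integral \`a la \cite{Ne03} and invoking Zeilberger's algorithm on a single-variable sum.

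Evaluate $I(0;z)$ and $I(1;z)$ in closed form using the beta integral, classical ${}_2F_1$ Gauss summation, and the Weierstrass product $\sinh(\pi z/2)=(\pi z/2)\prod_{k\ge1}(1+z^2/(4k^2))$; these should identify both values as explicit $\mathbb{Q}(z)$-linear combinations of $1$ and $(\cosh(\pi z)-1)/(3z^2)$. Together with the common recurrence this forces
\[
I(n;z)=\tilde q(n;z)\cdot\frac{\cosh(\pi z)-1}{3z^2}-\tilde p(n;z)
\]
for all $n\ge0$, where $\tilde p/\tilde q$ are the CF's convergents up to an affine adjustment, whence the CF's limit equals $(\cosh(\pi z)-1)/(3z^2)$. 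The speed of convergence follows from Poincaré's theorem: the characteristic equation at $n=\infty$ is $\la^2=11\la+1$ (the $z^2$-contributions are lower order) with roots $\varphi^{\pm5}$ for $\varphi=(1+\sqrt5)/2$, so $\tilde q(n;z)\sim(n!)^2\varphi^{5n}$ and $I(n;z)\sim(n!)^2\varphi^{-5n}\cdot C(z)$; this yields the exponent $\varphi^{-10n-5}$, and tracing the residues of the $\pm2z,\pm3z$ poles of the integrand produces the prefactor $(2/(3z^2))\sinh(2\pi z)\sinh(3\pi z)$.

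The chief obstacle is guessing the weight $w(x,y;z)$. As the authors openly concede, the CF itself was unearthed only through a fortunate match in the encyclopedia \cite{Co24b}, and no conceptual argument forces the integrand; one must sift through $z$-deformations of the Beukers double integral of Section~\ref{sec:sA}, constrained by compatibility at $z=0$, by the prescribed recurrence, and by the target evaluation $I(0;z)=(\cosh(\pi z)-1)/(3z^2)$. Once $w$ has been correctly identified, the telescoping verification and the $n\in\{0,1\}$ evaluations are essentially mechanical.
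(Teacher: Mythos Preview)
Your proposal is a strategy, not a proof: the entire argument hinges on a weight $w(x,y;z)$ that you never produce, and you concede as much in your final paragraph. Without that weight there is nothing to telescope and nothing to evaluate at $n=0,1$. Moreover, there is a conceptual reason to doubt the plan: Beukers-type double integrals with algebraic weights evaluate, via beta integrals and Gauss summation, to products and quotients of gamma values (exactly as in Section~\ref{sec:sA}), not to transcendental functions of the shape $\cosh(\pi z)$. The Weierstrass product you invoke is an \emph{infinite} product, and you give no mechanism by which a single definite integral would collapse to it; the hope that ``twists by $\pm2z,\pm3z$'' will manufacture $(\cosh(\pi z)-1)/(3z^2)$ at $n=0$ is unjustified.

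The paper's proof takes a completely different and much shorter route. It starts from an explicit closed-form series for the target: expanding $\cos(2z\theta)$ in powers of $\sin\theta$, specialising $\theta=\pi/6$ and replacing $z\mapsto 3iz$ gives
\[
\cosh(\pi z)=\sum_{n\ge0}\dfrac{(0^2+9z^2)(1^2+9z^2)\cdots((n-1)^2+9z^2)}{(2n)!}.
\]
Euler's transformation turns this hypergeometric-type series directly into a CF, and then Ap\'ery acceleration (in the generalised form of~\cite{Co24a}, since the naive Bauer--Muir step introduces denominators) produces the period-2 CF that ``miraculously'' simplifies to the stated one. No integral representation and no creative telescoping are used; the limit $(\cosh(\pi z)-1)/(3z^2)$ is built in from the outset rather than recovered from initial values.
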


\begin{proof}
Expanding into Taylor series in $x$ the function $\cos(z\asin(x))$
and specialising, it is immediate to prove that
\[
\cos(2z\th)=\sum_{n\ge0}2^{2n}(0^2-z^2)(1^2-z^2)\cdots((n-1)^2-z^2)\dfrac{\sin^{2n}(\th)}{(2n)!}.
\]
Choosing $\th=\pi/6$ and changing $z$ into $3iz$, we obtain
\[
\cosh(\pi z)=\sum_{n\ge0}\dfrac{(0^2+9z^2)(1^2+9z^2)\cdots((n-1)^2+9z^2)}{(2n)!}.
\]
Applying Euler's transformation into a CF gives
\[
\cosh(\pi z)=[[1,2,5n^2-4n+1+9z^2],[9z^2,-2n(2n-1)(n^2+9z^2)]].
\]
We can again apply Ap\'ery, but the direct application of
the {\tt GP} function {\tt cfapery} would not work because of the introduction
of denominators in the Bauer--Muir acceleration, so one must use the more
general method explained in~\cite{Co24a}. This works with no difficulty and
proves the theorem.
\end{proof}

Same remark as above concerning this: once again we have
a ``miracle'' simplification of the period 2 Ap\'ery, which does not occur
for similar series or CFs for $\cosh(\pi z)$.

\subsection{Big Ap\'ery}

For small Ap\'ery, we were lucky to find a continuous generalization in
the encyclopedia. On the other hand, there is no such generalization of
big Ap\'ery in the encyclopedia, and we have no idea how to find one.

\section{Additional comments}
\label{sec:finale}

Making the shift $n\mapsto n+1/2$ on the CF in Theorem~\ref{th:cosh}, one obtains the
CF
\[
f(z)=[[1,12(1-z^2),44n^2+1+36z^2],[60z^2,((2n+1)^2+16z^2)((2n+1)^2+36z^2)]]
\]
which again reduces to the CF for $(\G(1/4)/\G(3/4))^4$ in Theorem~\ref{th:sA} for $z=0$.
What is this function of $z$ (if it is an interesting one)?

Here we indicate some numerical observations (keeping in mind that $f$ is only defined up to a
M\"obius transformation):
\begin{enumerate}
\item $f(iz)$ is a quasiperiodic function of period 2,
  analogue to $\cos(\pi z)$; one can easily find asymptotics for large $n$
  of $f(in)$ and of $f(i(n+1/2))$.
\item $f(z)$ is an even function of $z$, and $f''(0)=(-24S+1920)/(7S-528)$ for
  $S=(\G(1/4)/\G(3/4))^4$.
\item $f(i/2)=0$ and, more generally, if $z=i(2m+1)/4$ or $z=i(2m+1)/6$
  for integer $m\ne0$ then $f(z)$ is a rational number.
\item As $z\to\infty$ we have $f(z)=-6z^2-113/12+O(1/z^2)$, and it should be
  easy to find the asymptotic expansion to any number of terms.
\end{enumerate}

\medskip
Finally, we should mention that a different take on the shift $n\mapsto n+1/2$ led us in \cite{CZ25} to construct continued fractions for so-called Chowla--Selberg gamma quotients and to give arithmetic applications, very much in Ap\'ery's spirit.

\medskip\noindent
\textbf{Acknowledgements.}
We thank Vasily Golyshev, Albrecht Klemm, and Don Zagier for related discussions on the shifts of big Ap\'ery and quasiperiods.
We further acknowledge critical comments of two anonymous referees.



\begin{thebibliography}{99}

\bibitem{Ap79}
\textsc{R. Ap\'ery},
Irrationalit\'e de $\zeta(2)$ et $\zeta(3)$,
\emph{Ast\'erisque} \textbf{61} (1979), 11--13.

\bibitem{BBBC07}
\textsc{D. Bailey}, \textsc{D. Borwein}, \textsc{J. Borwein} and \textsc{R. Crandall},
Hypergeometric forms for Ising-class integrals,
\emph{Experimental Math.} \textbf{16} (2007), 257--276.

\bibitem{BBBG08}
\textsc{D. Bailey}, \textsc{J. Borwein}, \textsc{D. Broadhurst} and \textsc{M. Glasser},
Elliptic integral evaluations of Bessel moments and applications,
J. Phys. A \textbf{41} (2008), Art.~205203, 46~pp.

\bibitem{Be79}
\textsc{F. Beukers},
A note on the irrationality of~$\zeta(2)$ and~$\zeta(3)$,
\emph{Bull. London Math. Soc.} \textbf{11}:3 (1979), 268--272.

\bibitem{BKSZ24}
\textsc{K. B\"onisch}, \textsc{A. Klemm}, \textsc{E. Scheidegger} and \textsc{D. Zagier},
D-brane masses at special fibres of hypergeometric families of Calabi--Yau threefolds, modular forms, and periods,
\emph{Comm. Math. Phys.} \textbf{405} (2024), Art.~134, 71~pp.

\bibitem{CDT24}
\textsc{F.~Calegari}, \textsc{V.~Dimitrov} and \textsc{Y.~Tang},
The linear independence of $1$, $\zeta(2)$ and $L(2,\chi_{-3})$,
\emph{Preprint} \href{https://arxiv.org/abs/2408.15403}{arXiv:2408.15403 [math.NT]} (2024), 218~pp.

\bibitem{Co24a}
\textsc{H.~Cohen},
Ap\'ery acceleration of continued fractions,
\emph{Preprint} \href{https://arxiv.org/abs/2401.17720}{arXiv:2401.17720 [math.NT]} (2024), 15~pp.

\bibitem{Co24b}
\textsc{H.~Cohen},
A database of continued fractions of polynomial type,
\emph{Preprint} \href{https://arxiv.org/abs/2409.06086}{arXiv:2409.06086 [math.NT]} (2024), 13~pp.

\bibitem{CZ25}
\textsc{H.~Cohen} and \textsc{W.~Zudilin},
Continued fractions and irrationality measures for Chowla--Selberg gamma quotients,
\emph{Preprint} \href{https://arxiv.org/abs/2510.00215}{arXiv:2510.00215 [math.NT]} (2025), 23~pp.

\bibitem{GZ21}
\textsc{V. Golyshev} and \textsc{D. Zagier},
Interpolated Ap\'ery numbers, quasiperiods of modular forms, and motivic gamma functions,
in: \emph{Integrability, Quantization, and Geometry, Vol. II. Quantum theories and algebraic geometry}, I.~Krichever, S.~Novikov, O.~Ogievetsky and S.~Shlosman (eds.),
Proceedings Symp. Pure Math. \textbf{103}.2 (Amer. Math. Soc., Providence, RI, 2021), 281--301.

\bibitem{HM01}
\textsc{M. Huttner} and \textsc{T. Matala-aho},
Diophantine approximations for a constant related to elliptic functions,
\emph{J. Math. Soc. Japan} \textbf{53}:4 (2001), 957--974.

\bibitem{Ko10}
\textsc{C. Koutschan},
HolonomicFunctions (user's guide),
\emph{RISC Report} 10-01 (2010),
\url{http://www.risc.jku.at/research/combinat/software/HolonomicFunctions/}.

\bibitem{Ne96}
\textsc{Yu.\,V. Nesterenko},
A few remarks on $\zeta(3)$,
\emph{Math. Notes} \textbf{59} (1996), 625--636.

\bibitem{Ne03}
\textsc{Yu.\,V. Nesterenko},
Integral identities and constructions of approximations to zeta-values,
\emph{J. Th\'eorie Nombres Bordeaux} \textbf{15} (2003), 535--560.

\bibitem{OS19}
\textsc{R. Osburn} and \textsc{A. Straub},
Interpolated sequences and critical $L$-values of modular forms,
in: \emph{Elliptic Integrals, Elliptic Functions and Modular Forms in Quantum Field Theory},
J.~Bl\"umlein, P.~Paule and C.~Schneider (eds.),
Texts Monogr. Symbol. Comput. (Springer, Cham, 2019), 327--349.

\bibitem{PWZ96}
\textsc{M. Petkov\v sek}, \textsc{H.\,S. Wilf} and  \textsc{D. Zeilberger},
$A=B$,
(A K Peters, Ltd., Wellesley, MA, 1996), xii+212~pp.

\bibitem{Po79}
\textsc{A. van der Poorten},
A proof that Euler missed\dots\ Ap\'ery's proof of the irrationality of $\zeta(3)$ (An informal report),
\emph{Math. Intelligencer} \textbf{1}:4 (1978/79), 195--203.

\bibitem{RV96}
\textsc{G. Rhin} and \textsc{C. Viola},
On a permutation group related to~$\zeta(2)$,
\emph{Acta Arith.} \textbf{77}:1 (1996), 23--56.

\bibitem{RV01}
\textsc{G. Rhin} and \textsc{C. Viola},
The group structure for~$\zeta(3)$,
\emph{Acta Arith.} \textbf{97}:3 (2001), 269--293.

\bibitem{Zu04}
\textsc{W.~Zudilin},
Arithmetic of linear forms involving odd zeta values,
\emph{J. Th\'eorie Nombres Bordeaux} \textbf{16} (2004), 251--291.

\bibitem{Zu17}
\textsc{W.~Zudilin},
A determinantal approach to irrationality,
\emph{Constr. Approx.} \textbf{45}:2 (2017), 301--310.

\end{thebibliography}
\end{document}